\documentclass[12pt,a4paper]{article}
\usepackage{bbm}
\usepackage{stmaryrd}
\usepackage{}
\usepackage{bbding}
\usepackage{mathrsfs}
\usepackage{amssymb}
\usepackage{amsfonts}
\usepackage{amsmath}
\usepackage{cases}
\usepackage{amsthm}
\usepackage[square, comma, sort&compress, numbers]{natbib}

\setlength{\textwidth 16cm}
\setlength{\textheight 21cm}
\usepackage[top=1in,bottom=1in,left=1.20in,right=1.20in]{geometry}

\newtheorem{theorem}{Theorem}[section]
\newtheorem{lemma}{Lemma}[section]

\newtheorem{proposition}[lemma]{Proposition}
\newtheorem{remark}{Remark}[section]

\numberwithin{equation}{section}
\theoremstyle{plain}

\newcommand{\me}{\mathrm{e}}
\newcommand{\mi}{\mathrm{i}}

\makeatletter

\newcommand{\Rmnum}[1]{\expandafter\@slowromancap\romannumeral #1@}
\makeatother

\DeclareMathOperator{\meas}{meas}

\begin{document}
\title{Quasi-Periodic solutions of Two Dimensional  Completely Resonant Reversible Schr\"{o}dinger $d$ Systems\thanks{Shuaishuai Xue acknowledges the support from The National Natural Science Foundation of China (Grant No. 12001275).}}
\author{  Yingnan Sun\footnote{School of mathematics, Nanjing University of Aeronautics and Astronautics,
Nanjing 211106, P.R. China(sunyingnan@nuaa.edu.cn).},Shuaishuai Xue\footnote{Corresponding author:School of Mathematics, Nanjing Audit University, Nanjing 211815, P.R.China(ssx@nau.edu.cn).}  }
\date{}
\maketitle
\begin{abstract}
We introduce an  abstract KAM (Kolmogorov-Arnold-Moser) theorem for infinite dimensional
reversible Schr\"odinger systems.  Using this KAM theorem together with partial Birkhoff normal form method, we find
 the existence of
 quasi-periodic solutions for a class of completely resonant reversible  coupled nonlinear Schr\"{o}dinger $d$ systems on two dimensional torus.
\end{abstract}

\textbf{Keywords.} KAM, Schr\"{o}dinger system, quasi-periodic solutions, reversible vector field, Birkhoff normal form.

\textbf{2010 Mathematics Subject Classification:} 37K55, 35B15.

\section[]{Introduction and Main Result}
\hspace{1.5em}
Consider the following $2-$dimensional nonlinear Schr\"{o}dinger $d$ systems:
 \begin{equation}\label{CNLS1}
	\begin{cases}
		( \mi \partial_t  -\Delta   ) u_1+|u_1|^2u_1+\partial_{\bar{u}_1}G_1(|u_1|^2,\cdots,|u_d|^2)=0,\quad                                  \\
		\vdots
		\\
		( \mi \partial_t  -\Delta  ) u_d+|u_d|^2u_d+\partial_{\bar{u}_d}G_d(|u_1|^2,\cdots,|u_d|^2)=0,\,x\in\mathbb{T}^2:=\mathbb{R}^2/{2\pi \mathbb{Z}^2},
	\end{cases}
\end{equation}
with periodic boundary conditions.
$\Delta$ is the Laplacian operator with respect to $x.$
 $G_h=o(|u|^4),\,h=1,2,\cdots,d$ are real analytic functions
near $(u_1,\cdots,u_d)=(0,\cdots,0).$
For general coupled nonlinearities $G_1\neq G_2\neq \cdots \neq G_d$,
equation \eqref{CNLS1} is no longer Hamiltonian but can be viewed an infinite dimensional reversible system with respect to the  involution
$S_0 : (u_1(x),\cdots,u_d(x)) \rightarrow  (\bar{u}_1(-x),\cdots,\bar{u}_d(-x)). $
Our aim is to obtain  quasi-periodic solutions of equation \eqref{CNLS1} by reversible  KAM theory.

Starting from the initial works by Kuksin \cite{Kuksin87} and Wayne\cite{Wayne90}, KAM-type iteration to construct quasi-periodic solutions for  the one dimensional partial differential equations (PDEs)   has been well-developed, see also \cite{ Wayne90, Kuksin96, Poschel96a, Chierchia00}.  The second Melnikov conditions which may not hold true for higher dimensional  spaces are necessary for KAM iteration.
Craig-Wayne and Bourgain (CWB) method do not need the second Melnikov conditions to obtain quasi-periodic solutions. Bourgain\cite{Bourgain98, Bourgain94} first proved the existence of quasi-periodic solutions for higher dimensional nonlinear Schr\"{o}dinger equations (NLS). As to further developments of the method,
see \cite{Bourgain05,WangW16,Berti12,Berti13b}.

The CWB iteration can not help us studying   dynamics information, such as the linear stability or zero Lyapunov exponents  of solutions. For understanding dynamics  of solutions,
 KAM iteration  for  higher dimensional PDEs  appeared later. Eliasson and Kuksin\cite{Eliasson10} studied a class of  higher dimensional NLS. To handle infinitely many resonances at each KAM step,  they introduced   T\"{o}plitz-Lipschitz property of perturbation. By developing the ideas of \cite{Eliasson10} and constructing suitable
partial Birkhoff normal form, Geng, Xu and You\cite{Geng11}  and Procesi and Procesi\cite{Procesi15} studied more difficult
completely resonant NLS on $\mathbb{T}^2$ and $\mathbb{T}^d$, respectively.

KAM theorem for equation systems has also been constructed recently.
Gao and Zhang\cite{Gao11} applied KAM theorem for KdV system.
 Gr\'ebert and  Da Rocha\cite{Grebert}  proved the existence of
quasi-periodic solutions for one  dimensional NLS system.
Shi and Xu\cite{Shi} constructed quasi-periodic solutions to a class of beam system.

In is paper, we introduce a   KAM theorem for infinite dimensional reversible systems and applies it to
equation \eqref{CNLS1}.
Compared with  previous results in \cite{Geng21} also for coupled NLS, the proof of the KAM theorem is much simpler while the KAM theorem    is applicable to  coupled system of $d$ equations.
We find that the normal   frequencies  of  \eqref{CNLS1} satisfy a gap condition
$$\Omega_{hn} - \Omega_{h'n} = \sum _{i\in\mathcal{I}} \frac{1}{2\pi^2} (\xi_{hi}-\xi_{h'i})\neq 0,\, h=1,\cdots,d
$$
 which can help us eliminate the resonance caused by the coupling of equations. Using this condition,  coupled system of   more complex equations, such as  NLS in \cite{Procesi15,X}, can be studied by KAM iteration directly. For example, NLS in \cite{X}
 $$	( \mi \partial_t  -\Delta   ) u+|u|^{2p}u+H(x,u,\bar{u})=0,
 $$
has normal   frequencies  $$
\Omega_n= \varepsilon^{-3p}|n|^2+\frac{p+1}{(2\pi)^{2p}}A_p(\xi)
$$
which can provide gap condition by moving $\xi$.

Now we state our main theorem.
Let  $\mathcal{I}=\{i^{(1)}, i^{(2)},\cdots,i^{(b)}\}\subset \mathbb{Z}^2$ be an admissible set of sites of Fourier modes in \cite{Geng11}. Under periodic boundary conditions,
 we denote   the
eigenvalues of  $-\Delta$ by  $\lambda_{hn},$ with $ h=1,\cdots,d,\,n\in \mathbb{Z}^2$   respectively, satisfying
$$\lambda_{hi^{(a)}}=|i^{(a)}|^2 ,\,\,1\leq a\leq b, \quad
 \lambda_{hn}=|n|^2 ,\,\,n\notin \mathcal{I},$$
and the corresponding eigenfunctions $\phi_i(x)=\frac{1}{\sqrt{(2\pi)^{2}}}\me^{\mi \langle i,x\rangle}.$
Assume the subset
$$\xi=(\xi_1,\cdots, \xi_d )=(\xi_{hi^{(a)}}, a=1,\cdots,b,h=1,\cdots,d )\in \mathcal{O}\subset \mathbb{R}^{db}$$ is a bounded parameters set having positive Lebesgue measure and
$$ |\xi_{h}-\xi_{h'}|_1>C ,\, h\neq h'.$$
Then we have the following main result.
\begin{theorem}\label{mainresult}
For   any $0<\gamma\ll1,$
there exists a Cantor subset $\mathcal{O}_\gamma \subset \mathcal{O}$  with $\meas(\mathcal{O}\setminus \mathcal{O}_\gamma)=O(\gamma^{\frac {1}{4}})$,
such that for any $\xi={(\xi_{1i^{(1)}} ,...\xi_{di^{(b)}})}\in \mathcal{O}_\gamma ,$ equation \eqref{CNLS1}   possesses a  small amplitude analytic quasi-periodic solution of the form
 \begin{equation}\label{sol}
                       \begin{cases}
                       u_1(t,x)=\displaystyle\sum_{a=1}^b \sqrt{\frac{1}{4\pi^2}\xi_{1i^{(a)}}} e^{{\rm i}\omega_{1i^{(a)}}t} e^{{\rm i}\langle i^{(a)},x\rangle}+O(|\xi_1|^{\frac{3}{2}})                              \\
                       \vdots
                       \\
                       u_d(t,x)=\displaystyle\sum_{a=1}^b \sqrt{\frac{1}{4\pi^2}\xi_{di^{(a)}}} e^{{\rm i}\omega_{di^{(a)}}t} e^{{\rm i}\langle i^{(a)},x\rangle}+O(|\xi_d|^{\frac{3}{2}})           .
                       \end{cases}
\end{equation}
where $ \omega_{hi^{(a)}}=|i^{(a)}|^2+  O(|\xi_h|),\, h=1,\cdots, d$.
\end{theorem}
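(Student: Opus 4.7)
The plan is to follow the now-standard KAM-for-PDE scheme, adapted to the reversible rather than Hamiltonian setting: Fourier-decompose, separate tangential from normal sites, kill the resonant quartic terms by a partial Birkhoff normal form, rewrite the system in action--angle--normal variables, and apply the abstract reversible KAM theorem announced in the abstract. Concretely, I would first write $u_h(t,x)=\sum_{n\in\mathbb{Z}^2}q_{hn}(t)\phi_n(x)$ and recast \eqref{CNLS1} as an infinite-dimensional system for $(q_{hn},\bar q_{hn})$ on a weighted sequence space. The unperturbed linear part has completely resonant spectrum $\lambda_{hn}=|n|^2$, so one cannot iterate KAM directly; amplitude parameters $\xi=(\xi_{hi^{(a)}})$ must be produced by a preliminary normal form. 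The involution $S_0$ translates into $(q_{hn})\mapsto(\bar q_{h,-n})$, so the resulting vector field is reversible in the sense required by the abstract theorem.

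Next I would carry out the partial Birkhoff normal form. Using the admissibility of $\mathcal{I}=\{i^{(1)},\dots,i^{(b)}\}$ in the sense of Geng--Xu--You, only diagonal quartic monomials of the form $|q_{hi}|^2|q_{hn}|^2$ and $|q_{hi}|^2|q_{h'n}|^2$ survive the symplectic/reversible change of variables; all other order-four terms are removed. Introducing action--angle coordinates on the tangential modes and keeping the normal modes complex, the normalised system reads
\begin{equation*}
N=\sum_h\langle\omega_h(\xi),I_h\rangle+\sum_{h,\;n\notin\mathcal{I}}\Omega_{hn}(\xi)\,q_{hn}\bar q_{hn}+\text{higher order},
\end{equation*}
with $\omega_{hi^{(a)}}(\xi)=|i^{(a)}|^2+O(|\xi_h|)$ and $\Omega_{hn}(\xi)=|n|^2+\tfrac{1}{2\pi^2}\sum_{i\in\mathcal{I}}\xi_{hi}$. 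The hypothesis $|\xi_h-\xi_{h'}|_1>C$ then gives precisely the gap $\Omega_{hn}-\Omega_{h'n}=\sum_i\tfrac{1}{2\pi^2}(\xi_{hi}-\xi_{h'i})\neq 0$ emphasised in the introduction; this is exactly what decouples different equations at the level of the second Melnikov conditions.

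With the normal form in place, I would verify the hypotheses of the abstract reversible KAM theorem: non-degeneracy of $\xi\mapsto\omega(\xi)$, which is immediate since this map is affine with non-vanishing Jacobian; asymptotic linearity of $\Omega_{hn}$ in $|n|^2$; and a T\"oplitz--Lipschitz (or clustering/decay) property of the perturbation, which is what allows the 2D iteration to survive the infinite family of near-resonances at each step. The KAM iteration then produces, for $\xi$ in a Cantor subset $\mathcal{O}_\gamma$, a reversibility-preserving transformation conjugating the perturbed vector field to a pure rotation on an invariant torus; translating back through the Birkhoff and Fourier changes of variable yields the quasi-periodic solution in the form \eqref{sol} with $\omega_{hi^{(a)}}=|i^{(a)}|^2+O(|\xi_h|)$.

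The main obstacle is the verification of the second Melnikov conditions
\begin{equation*}
\bigl|\langle k,\omega(\xi)\rangle+\Omega_{hn}(\xi)\pm\Omega_{h'n'}(\xi)\bigr|\ge \gamma/|k|^{\tau},
\end{equation*}
because the unperturbed normal frequencies live on the lattice $\{|n|^2:n\in\mathbb{Z}^2\}$ whose differences cluster densely. The potentially dangerous resonances $n=n'$, $h\neq h'$ are precisely those killed by the gap condition, while the off-diagonal cases require the T\"oplitz--Lipschitz structure of the perturbation together with the standard lattice-shell counting on $\mathbb{Z}^2$. A secondary difficulty is the measure estimate: excluded resonant sets must be summed over $k\in\mathbb{Z}^{db}$ and over pairs $(h,n),(h',n')$, and a careful balancing of the Diophantine exponent against the normal-frequency decay yields the claimed bound $\meas(\mathcal{O}\setminus\mathcal{O}_\gamma)=O(\gamma^{1/4})$.
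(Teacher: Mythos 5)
Your overall strategy coincides with the paper's: Fourier expansion to a lattice reversible system, a partial Birkhoff normal form built on the admissibility of $\mathcal{I}$, passage to action--angle variables with amplitude parameters $\xi$, and verification of the hypotheses of the abstract reversible KAM theorem, with the gap $\Omega_{hn}-\Omega_{h'n}=\frac{1}{2\pi^2}\sum_{i\in\mathcal{I}}(\xi_{hi}-\xi_{h'i})$ disposing of the new resonances $n=n'$, $h\neq h'$ created by the coupling.

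There is, however, one concrete misstep in your normal form step. You assert that after the Birkhoff transformation ``only diagonal quartic monomials \ldots survive; all other order-four terms are removed.'' For the completely resonant cubic NLS on $\mathbb{T}^2$ this is false: the resonance relations $i-j+n-m=0$, $|i|^2-|j|^2+|n|^2-|m|^2=0$ (and the analogous second-type relations) with $i,j\in\mathcal{I}$ and $n,m\in\mathbb{Z}^2_1$ admit nontrivial solutions with $n\neq m$, and admissibility of $\mathcal{I}$ only guarantees that for each such $n$ the triplet $(i,j,m)$ is \emph{unique}, not that it is absent. The corresponding monomials $\bar q_{hi}q_{hj}q_{hm}\frac{\partial}{\partial q_{hn}}$ and $q_{hi}q_{hj}\bar q_{hm}\frac{\partial}{\partial q_{hn}}$ cannot be eliminated --- the divisor $\lambda_i-\lambda_j+\lambda_n-\lambda_m$ vanishes, so your step ``kill the resonant quartic terms'' would require dividing by zero --- and they survive as the non-diagonal part $\mathcal{A}=\sum_h\mathcal{A}_h$ of the normal form in \eqref{Psi}. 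Consequently the normal form is only block-diagonal on the resonant pairs $(n,m)\in\mathcal{L}_1\cup\mathcal{L}_2$, and the second Melnikov conditions must be imposed on the $2\times 2$ matrices $M_{hn}$ of assumption (A3) and their tensor products as in \eqref{Mel13}, rather than on the scalar combinations $\langle k,\omega\rangle+\Omega_{hn}\pm\Omega_{h'n'}$ you wrote; your measure estimate implicitly relies on this block structure. A minor additional point: since the quartic part of the $h$-th equation involves only $u_h$, the cross terms $|q_{hi}|^2|q_{h'n}|^2$ you list do not occur at this order, which is precisely why $\Omega_{hn}$ depends only on $\xi_h$ and the gap condition takes the clean form above.
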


\begin{remark}
	We can choose different admissible sets for each equation in \eqref{CNLS1} because the  resonance caused by the coupling of equations disappears. But we need more  notations to describe different admissible sets, frequencies and normal forms, we choose the same set for simplicity.
\end{remark}

The rest of the paper is organized as follows.
In Section 2, we give the definitions of weighted norms for functions and vector fields.
An  abstract   KAM theorem (Theorem \ref{KAM}) for infinite dimensional  reversible systems is presented in Section 3.
In Section 4, we use the KAM theorem to prove Theorem \ref{mainresult}.

\section[]{Preliminary}
\hspace{1.5em}
For the sake of completeness, we first introduce some definitions and notations.

 Let $\mathcal{I}\subset \mathbb{Z}^2$ be a  finite subset  and $\rho> 0$, we introduce the Banach space $\ell^{\rho}_{\mathcal{I}}$ of all complex sequences
$z_h=(z_{hn})_{n\in \mathbb{Z}^2\setminus \mathcal{I}}$ with weighted norm
$$\|z_h\|_\rho:=\sum\limits_{n\in\mathbb{Z}^2\setminus \mathcal{I}}\me^{|n|\rho}|z_{hn}|<\infty,\, |n|=\sqrt{|n_1|^2+|n_2|^2},\,h=1,\cdots,d.$$


Given a finite admissible subsets of $\mathbb{Z}^2:$ $ \mathcal{I}=\{i^{(1)}, i^{(2)},\cdots,i^{(b)}\}$ .
Denote $\mathbb{Z}^2_1:=\mathbb{Z}^2\setminus \mathcal{I} $ and $\ell^{\rho}_{1}:=\ell^{\rho}_{\mathcal{I}}.$
Consider the  phase space $$\mathscr{P}_{\rho}:=(\mathbb{T}^{b}\times\mathbb{R}^b\times\ell^{\rho}_{1}\times   \ell^{\rho}_{1} )^d\ni y:=(\theta_1,I_1, z_1, \bar{z}_1,   \cdots, \theta_d,I_d, z_d, \bar{z}_d     ).$$
We introduce a complex neighborhood
\begin{equation*}
  \begin{split}
  D_\rho(r,s)=\{y:|\textmd{Im} \theta_h|<r,  |I_h|<s,\|z_h\|_{\rho}<s,  \|\bar{z}_h\|_{\rho}<s, h=1,\cdots, d\}
 \end{split}
\end{equation*}
of $\mathcal{T}^{db}_0:=$ $$\mathbb{T}^{b}\times \{I_1=0\}\times\{z_1=0\} \times\{\bar{z}_1=0\}\times\cdots \times \mathbb{T}^{b}\times \{I_d=0\}\times\{z_d=0\} \times\{\bar{z}_d=0\},$$
where $|\cdot|$ is the sup-norm for vectors.

Suppose $\mathcal{O} \subset \mathbb{R}^{db}$ is a compact parameter subset. A function $f:D_\rho(r,s)\times \mathcal{O} \rightarrow \mathbb{C}$   is real analytic in
$y$ and $C^{4}_W$ (i.e., $C^{4}-$smooth in the sense of Whitney) in $\xi\in \mathcal{O}$  and has
Taylor-Fourier series expansion
\begin{equation*}
  \begin{split}
        f(y ; \xi)
            =&\sum\limits_{k\in \mathbb{Z}^{db}, l\in \mathbb{N}^{db},\alpha, \beta\in \mathbb{N}^{d\mathbb{Z}^2_1}}f_{kl\alpha \beta,\tilde{\alpha}\tilde{\beta}}(\xi)\me^{\mi \langle k, \theta\rangle}I^l z^{\alpha}\bar{z}^{\beta},
   \end{split}
\end{equation*}
where $\langle k,\theta\rangle= \sum\limits^d_{h=1} \sum\limits^b_{a=1}k_{ha}\theta_{ha},$ $I^{l}=\prod^d\limits_{h=1}\prod^b\limits_{a=1}I^{l_{ha}}_{ha}$ and $z^{\alpha}\bar{z}^{\beta}=\prod^d\limits_{h=1}\prod\limits_{j\in\mathbb{Z}^2_1}z^{\alpha_{hj}}_{hj}\bar{z}^{\beta_{hj}}_{hj}.$  $\alpha, \beta$ have only finitely many nonzero components. We define the weighted norm of $f$ as follows
\begin{equation*}
  \begin{split}
        \|f\|_{D_\rho(r,s)\times\mathcal {O}}
            =&\sup\limits_{\|z_h\|_\rho<s,\|\bar{z}_h\|_\rho<s\atop h=1,\cdots, d} \sum\limits_{k,l,\alpha, \beta}|f_{kl\alpha \beta,\tilde{\alpha}\tilde{\beta}}|_{\mathcal{O}}\me^{|k|r}s^{|l|}|z^{\alpha}||\bar{z}^{\beta}|.
   \end{split}
\end{equation*}
where $|f_{kl\alpha \beta,  \tilde{\alpha}\tilde{\beta}}|_{\mathcal{O}}=\sup\limits_{\xi\in \mathcal{O}}\sum\limits_{0\leq i\leq4}|\partial^i_\xi f_{kl\alpha \beta}|.$

Let
\begin{equation*}
z^\varrho_{hn}=
         \begin{cases}
           z_{hn},\quad                                      &  \varrho=+,\\
          \bar{z}_{hn},\quad           &   \varrho=- .
         \end{cases} \ h=1,\cdots,d.
\end{equation*}

Consider a vector field $X(y), y\in D_\rho(r,s):$
\begin{equation}\label{pdoperator}
        X(y)=        \sum\limits_{\textsf{v}\in \mathscr{V}}X^{(\textsf{v})}(y)\frac{\partial}{\partial \textsf{v}}= X^{(\theta)}(y)\frac{\partial}{\partial \theta}+X^{(I)}(y)\frac{\partial}{\partial I}
        + X^{(z)}(y)\frac{\partial}{\partial z}+ X^{(\bar{z})}(y)\frac{\partial}{\partial \bar{z}}
\end{equation}
where
 $\mathscr{V}=\{\theta_{ha},I_{ha},\,z_{hn}, \bar{z}_{hn}: a=1,\cdots,b;n\in\mathbb{Z}^2_1;h=1\cdots,d\}.$
Suppose $X$ is  real analytic  in $y$ and depends $C^{4}_W$ smoothly on parameters $\xi\in \mathcal {O},$
 define the weighted norm of $X$ as follows\footnote{The norm of vector valued function $G:D_\rho(r,s)\times\mathcal {O}\rightarrow \mathbb{C}^b$, $b<\infty,$ is defined as $\|G\|_{D_\rho(r,s)\times\mathcal {O}}=\sum\limits^b_{a=1}\|G_a\|_{D_\rho(r,s)\times\mathcal {O}}$.} $\|X\|_{s;D_\rho(r,s)\times\mathcal {O}}= $
\begin{equation*}
\sum^d\limits_{h=1 }  ( \|X^{(\theta_h)}  \|_{D_\rho(r,s)\times\mathcal {O}}+\frac{1}{s}\|X^{(I_h)}\|_{D_\rho(r,s)\times\mathcal {O}}
+\frac{1}{s}\sum\limits_{\varrho=\pm}\sum\limits_{n\in \mathbb{Z}^2_1}\me^{|n|\rho}\|X^{(z^\varrho_{hn})}\|_{D_\rho(r,s)\times\mathcal {O}}).
\end{equation*}
Denote the Lie bracket of two vector fields $X$ and $Y$ by
  $[X,Y]=Y X-X Y.$

\section[]{A  KAM Theorem for Infinite Dimensional Reversible Systems}
\hspace{1.5em}
In this section, we give an abstract KAM theorem for infinite dimensional reversible systems.
Recall the   admissible set $\mathcal{I}$ which defined in \cite{Geng11}. Let $\mathcal{L}_1$ be the subset of $\mathbb{Z}^2_1$ with the following property:
for each $n\in \mathcal{L}_1$, there exists a unique triplet $(i, j,m)$ with $m\in \mathbb{Z}^2_1$
$i, j\in \mathcal{I}$ such that
$$i-j+n-m =0,\,\,|i|^2-|j|^2 +|n|^2-|m|^2= 0.$$
In this case, we say that $(n,m)$ is a resonant pair of the first type. $\mathcal{L}_1$ consists  of  resonant
pairs of the first type.

Let $\mathcal{L}_2$ be the subset of $\mathbb{Z}^2_1$ with the following property:
for each $n\in \mathcal{L}_2$, there exists a unique triplet $(i, j,m)$ with $m\in \mathbb{Z}^2_1$
$i, j\in \mathcal{I}$ such that
$$-i-j+n+m =0,\,\,-|i|^2-|j|^2 +|n|^2+|m|^2= 0.$$
In this case, we say that $(n,m)$ is a resonant pair of the second type. $\mathcal{L}_2$ is composed of resonant
pairs of the second type. $\mathcal{L}_2$ consists  of finitely
many resonant pairs of the second type. We have $\mathcal{L}_1\cap \mathcal{L}_2=\emptyset$ and denote
 $\mathbb{Z}^2_2:=\mathbb{Z}^2\setminus (\mathcal{I}\cup \mathcal{L}_1\cup \mathcal{L}_2 ).$

Given an involution $S:(\theta,I, z,\bar{z})\mapsto(-\theta,I, \bar{z},z).$
We consider a family of  $S-$reversible vector fields
 $X_0=N+\mathcal{A} =\sum_{h=1}^d (N_h+\mathcal{A}_h)$ with
\begin{equation}\label{VFN}
\begin{split}
  N_h=&\sum_{{i}\in \mathcal{I}}\omega_{hi}\frac{\partial}{\partial \theta_{hi}}+\sum_{n\in \mathbb{Z}^2_1}(\mi\Omega_{hn} z_{hn}\frac{\partial}{\partial z_{hn}}
  -\mi \Omega_{hn}\bar{z}_{hn}\frac{\partial}{\partial \bar{z}_{hn}})\\
  +&\sum_{n\in \mathcal{L}_1}(\mi\omega_{hi} z_{hn}\frac{\partial}{\partial z_{hn}}
  -\mi \omega_i\bar{z}_{hn}\frac{\partial}{\partial \bar{z}_{hn}}) - \sum_{n\in \mathcal{L}_2}(\mi\omega_{hi} z_{hn}\frac{\partial}{\partial z_{hn}}
  -\mi \omega_{hi}\bar{z}_{hn}\frac{\partial}{\partial \bar{z}_{hn}})
 \end{split}
\end{equation}
\begin{equation}\label{VFA}
  \begin{split}
  \mathcal{A}_h=&\sum_{n\in \mathcal{L}_1}(\mi A_{hn}z_{hn}\frac{\partial}{\partial z_{hm}}
  -\mi A_{hn}\bar{z}_{hn}\frac{\partial}{\partial \bar{z}_{hm}}) \\
  +&\sum_{n\in \mathcal{L}_2}(\mi A_{hn}z_{hn}\frac{\partial}{\partial \bar{z}_{hm}}
 -\mi A_{hn}\bar{z}_{hn}\frac{\partial}{\partial z_{hm}}),\quad \omega_{hi}, \Omega_{hn},A_{hn}\in \mathbb{R}.\\
   \end{split}
\end{equation}
which corresponds to an invariant torus on the phase space.

Consider now the perturbed $S-$reversible vector field
\begin{equation}\label{perbVF}
    X=X^0+P=N+\mathcal{A}+P(y;\xi).
\end{equation}
We will prove that, for typical  (in the sense of Lebesgue measure) $\xi\in \mathcal{O}$, the vector fields \eqref{perbVF} still admit
 invariant tori for sufficiently small $P.$   For this purpose, we need the following six assumptions:
 \begin{description}
   \item[(A1) Non-degeneracy: ]  The map $\xi\mapsto\omega(\xi)$ is a $C^{4}_W$ diffeomorphism between $\mathcal{O}$ and its image.
   \item[(A2) Asymptotics  of normal frequencies:]
   \begin{equation}\label{AsyNF}
   \Omega_{hn}=\varepsilon^{-4}(|n|^2 )+\Omega^0_{hn},\,\,n\in \mathbb{Z}^2_1.
   \end{equation}
where    $\Omega^0_{hn}\in C^{4}_W(\mathcal{O})$ with $C^{4}_W-$norm bounded by a   positive constant $L.$

   \item[(A2*)    Gap condition of normal frequencies:] There exist $\gamma>0,    $ such that

\begin{equation}\label{Mel14}
	|\Omega^0_{hn}-\Omega^0_{h'n}|>\gamma,\, h\neq h',\, n\in \mathbb{Z}^2_1.
\end{equation}
 \item[(A3) Non-resonance conditions: ]
Denote

\begin{equation*}
\begin{cases}
M_{hn}=\Omega_{hn} ,\,&n\in  \mathbb{Z}^2_2, \\

M_{hn}=\left(\begin{matrix} \Omega_{hn}+\omega_{hi}&A_{hn} \\
                     A_{hm}&\Omega_{hm}+\omega_{hj}  \\

                            \end{matrix}\right),\, & n \in\mathcal{L}_1, \\
M_{hn}=\left(\begin{matrix} \Omega_{hn}-\omega_{hi}&A_{hn} \\
                     -A_{hm}&-\Omega_{hm} +\omega_{hj}
                            \end{matrix}\right) ,\,&n \in\mathcal{L}_2.
                            \end{cases}
\end{equation*}

Suppose $ A_{hn}\in C^{4}_W(\mathcal{O})$ and there exist $ \tau>0, $ such that
\begin{equation}\label{Mel1}
   |\langle k,\omega\rangle|\geq\frac{\gamma}{|k|^\tau},\,\, k  \neq 0,
\end{equation}
\begin{equation}\label{Mel2}
  |\langle k,\omega\rangle I_{\varphi(n)}\pm M_{hn}|\geq\frac{\gamma}{|k|^\tau},
\end{equation}
\begin{equation}\label{Mel13}
  |\det(\langle k,\omega\rangle I_{{\varphi(n)}{\varphi(n')}}\pm M_{hn}^T\otimes I_{\varphi(n')}\pm I_{\varphi(n)}\otimes M_{h'n'})|\geq\frac{\gamma}{|k|^\tau},\,\, k  \neq 0
\end{equation}

where $I_a$ is $a\times a$ identity matrix, ${\varphi(n)} (resp. {\varphi(n')})$  denotes the dimension of $M_{hn}(resp. M_{hn'})$. $\det(\cdot)$, $\otimes $  and $(\cdot)^T$ denotes the determinant, the tensor product and the  transpose of matrices, respectively.

 \item[(A4) Regularity: ] $\mathcal{A}+P$ is real analytic in $y$ and $C^{4}_W-$smooth in $\xi$. Moreover,
 $\|\mathcal{A}\|_{s;D(r,s)\times\mathcal{O}}<1,$ $\varepsilon_0:=\|P\|_{s;D(r,s)\times\mathcal{O}}<\infty.$

 \item[(A5) Momentum conservation: ]
 Denote $n\in \mathbb{Z}^2$ by $((n)_1,(n)_2) $. The perturbation $P$ satisfies $[P, \mathbb{M}^{( l)}]=0,\,\,(l=1,2),$ where
\begin{equation*}
    \mathbb{M}^{( l)} = \sum^d_{h=1}( \sum^b_{a=1}(i^{(a)})_l\frac{\partial}{\partial \theta_{ha}}+ \sum_{\varrho=\pm} \sum_{n\in \mathbb{Z}^2_1}\varrho \mi (n)_lz^{\varrho}_{hn}\frac{\partial}{\partial z^{\varrho}_{hn}} )
\end{equation*}

\item[(A6) T\"{o}plitz-Lipschitz property: ]
Let
$ \Lambda=\sum^d_{h=1}\sum\limits_{\sigma=\pm}\sigma\mi(\sum\limits_{n\in \mathbb{Z}^2_1}\Omega^0_{hn}(\xi) z^\sigma_{hn}\frac{\partial}{\partial z^\sigma_{hn}}
  ).$
For  fixed $n,m\in \mathbb{Z}^2$ $c\in \mathbb{Z}^2\setminus\{0\}, $ the following limits exist and satisfy:
\begin{equation}\label{tl0}
     \|\lim_{t\rightarrow\infty}\frac{\partial P^{(x)}}{\partial z^\sigma_{h'(n+tc)} }\|_{D_\rho(r,s)\times\mathcal{O}}\leq \varepsilon_0, \,\,\,\,x=\theta_{ha}, I_{ha}.
\end{equation}
\begin{equation}\label{tl1}
\|\lim_{t\rightarrow\infty}\frac{\partial (\Lambda+\mathcal{A}+P)^{(z^\sigma_{h (n+tc)})}}{\partial z^{\pm\sigma}_{h(m\pm tc)}}\|_{D_\rho(r,s)\times\mathcal{O}}
\leq \varepsilon_0\me^{-|n\mp m|\rho}.
\end{equation}
\begin{equation}\label{tl2}
	\|\lim_{t\rightarrow\infty}\frac{\partial (P)^{(z^\sigma_{h (n+tc)})}}{\partial z^{\pm\sigma}_{h'(m\pm tc)}}\|_{D_\rho(r,s)\times\mathcal{O}}
	\leq \varepsilon_0\me^{-|n\mp m|\rho}.
\end{equation}
Furthermore, there exists $K>0$  such that when  $|t|>K,$ the following estimates hold.
\begin{equation}\label{tl3}
   \|\frac{\partial P^{(x)}}{\partial z^\sigma_{h' (n+tc)} }-\lim_{t\rightarrow\infty}\frac{\partial P^{(x)}}{\partial z^\sigma_{h' (n+tc)} }\|_{D_\rho(r,s)\times\mathcal{O}}\leq \varepsilon_0,
\end{equation}
$x=\theta_{ha}, I_{ha}.$
\begin{equation}\label{tl4}
\|\frac{\partial (\Lambda+\mathcal{A}+P)^{(z^{\sigma}_{h (n+tc)})}}{\partial z^{\pm\sigma}_{h'(m\pm tc)}}-\lim_{t\rightarrow\infty}\frac{\partial (\Lambda+\mathcal{A}+P)^{(z^{\sigma}_{h (n+tc)})}}{\partial z^{\pm\sigma}_{h (m\pm tc)}}\|_{D_\rho(r,s)\times\mathcal{O}}\leq \frac{\varepsilon_0}{|t|}\me^{-|n\mp m|\rho}.
\end{equation}
\begin{equation}\label{tl5}
	\|\frac{\partial ( P)^{(z^{\sigma}_{h (n+tc)})}}{\partial z^{\pm\sigma}_{h'(m\pm tc)}}-\lim_{t\rightarrow\infty}\frac{\partial ( P)^{(z^{\sigma}_{h (n+tc)})}}{\partial z^{\pm\sigma}_{h'(m\pm tc)}}\|_{D_\rho(r,s)\times\mathcal{O}}\leq \frac{\varepsilon_0}{|t|}\me^{-|n\mp m|\rho}.
\end{equation}
 \end{description}

\begin{remark}
In (A6), the conditions \eqref{tl1} and \eqref{tl4} are the most important  for measure estimates. The
role played by the conditions \eqref{tl0} and \eqref{tl3}  is to preserve T\"{o}plitz-Lipschitz property   after the KAM iteration.
\end{remark}

Now we state our KAM theorem.
\begin{theorem}\label{KAM}
Suppose the $S-$reversible vector field $X=N+\mathcal{A }+P$ in \eqref{perbVF} satisfies $(A1)-(A6).$
Let $\gamma>0$ be small enough, then there exists a positive $\varepsilon$ depending  only on $d,b, L, K, \tau, r, s$
and $\rho$ such that if $\|P\|_{s;D(r,s)\times\mathcal{O}}\leq\varepsilon,$   the following holds: There exist

(1)
 a Cantor subset $\mathcal{O}_\gamma\subset\mathcal{O}$ with Lebesgue measure $\meas(\mathcal{O}\setminus\mathcal{O}_\gamma)=O(\gamma^{1/4});$

(2) a $C^{4}_W-$smooth family of  real analytic  torus embeddings
$$\Psi:\mathbb{T}^{db}\times\mathcal{O}_\gamma\rightarrow D_\rho(r,s)$$
which is $\frac{\varepsilon}{\gamma^{4}}-$close to the trivial embedding
$\Psi_0:\mathbb{T}^{db}\times\mathcal{O}\rightarrow \mathcal{T}^{db}_0;$

(3) a $C^{4}_W-$smooth  map $\phi:\mathcal{O}_\gamma\rightarrow \mathbb{R}^{db}$
which is  $\varepsilon-$close to the unperturbed frequency $\omega$
such that for every $\xi\in \mathcal{O}_\gamma$ and $\theta\in\mathbb{T}^{db}$
the curve $t\mapsto \Psi(\theta+\phi(\xi)t; \xi)$ is a quasi-periodic solution of the equation governed by
the  vector field $X=N+\mathcal{A}+P.$
\end{theorem}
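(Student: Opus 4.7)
The plan is to prove Theorem~\ref{KAM} via a standard reversible KAM iteration, constructing inductively a sequence of coordinate transformations $\Phi_\nu$ that conjugate the vector field $X_\nu = N_\nu + \mathcal{A}_\nu + P_\nu$ to $X_{\nu+1} = N_{\nu+1} + \mathcal{A}_{\nu+1} + P_{\nu+1}$, with new error of size roughly $\varepsilon_\nu^{1+\kappa}$ on a shrunken domain $D_{\rho_{\nu+1}}(r_{\nu+1}, s_{\nu+1})$ and for parameters in a shrinking Cantor set $\mathcal{O}_\nu$. Each $\Phi_\nu$ is the time-one map of an $S$-anti-reversible vector field $F_\nu$, chosen so that after conjugation the low-order resonant part of $P_\nu$ is absorbed into the updated normal form $N_{\nu+1} + \mathcal{A}_{\nu+1}$ while the remaining off-resonant and high-order terms constitute the new perturbation $P_{\nu+1}$. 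Convergence of $\Phi_\infty = \lim \Phi_1 \circ \cdots \circ \Phi_\nu$ on $\mathcal{O}_\gamma := \bigcap_\nu \mathcal{O}_\nu$ will yield the torus embedding $\Psi$ and the limit frequency map $\phi(\xi)$ required by the theorem.

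At each step I would split $P_\nu = R_\nu + (P_\nu - R_\nu)$, where $R_\nu$ collects the Fourier--Taylor monomials of total degree at most two in $(I, z, \bar z)$; only these terms genuinely obstruct convergence. Writing $\mathcal{N}_\nu := N_\nu + \mathcal{A}_\nu$, the homological equation
\begin{equation*}
[\mathcal{N}_\nu, F_\nu] + R_\nu = [R_\nu]_{\mathrm{res}}
\end{equation*}
is solved coefficient by coefficient after a Fourier truncation at some $|k|\le K_\nu$: for Fourier index $k$ and a normal-mode label $(h,n)$ or $(h,n;h',n')$, the relevant block is exactly $\langle k,\omega\rangle I \pm M_{hn}$ or $\langle k,\omega\rangle I \pm M_{hn}^{T}\otimes I \pm I\otimes M_{h'n'}$, so conditions \eqref{Mel1}, \eqref{Mel2}, \eqref{Mel13} give invertibility with quantitative bounds, from which analytic estimates on $F_\nu$ over a slightly smaller domain follow by Cauchy's inequality. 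The resonant residue $[R_\nu]_{\mathrm{res}}$ is exactly the $k=0$ diagonal piece together with the two-mode blocks compatible with $\mathcal{A}_\nu$, so that it naturally updates $\omega$, $\Omega$ and $A$.

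Three structural properties must survive the iteration. Reversibility is ensured by choosing $F_\nu$ to be $S$-anti-reversible so that $\Phi_\nu=\exp(F_\nu)$ is $S$-reversible; the homological equation admits such a solution because $S$ acts compatibly on both sides. Momentum conservation (A5) propagates because $[\cdot,\mathbb{M}^{(l)}]$ commutes with $[\mathcal{N}_\nu,\cdot]$, so only momentum-preserving monomials can appear in $F_\nu$ and hence in $P_{\nu+1}$. The T\"oplitz--Lipschitz property (A6) is the most delicate, and I would verify that the limits \eqref{tl0}--\eqref{tl5} behave well under the three operations used in the iteration (inverting $[\mathcal{N}_\nu,\cdot]$, taking Lie brackets, and composing flows), using the off-diagonal exponential factors $\me^{-|n\mp m|\rho}$ to dominate the relevant infinite sums, in the spirit of Eliasson--Kuksin~\cite{Eliasson10} and Geng--Xu--You~\cite{Geng11}.

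The final step is the measure estimate: at stage $\nu$ one discards parameters that violate \eqref{Mel1}--\eqref{Mel13} for the current frequencies $\omega^\nu, \Omega^\nu, A^\nu$. Non-degeneracy (A1), the asymptotics (A2), and the gap condition (A2*) --- which decouples the $h\neq h'$ components of the tensor-product inequality \eqref{Mel13} --- combined with the T\"oplitz--Lipschitz reduction of the a priori infinite family of conditions to finitely many genuinely new inequalities per $k$, allow me to bound $\meas(\mathcal{O}\setminus\mathcal{O}_\gamma)$ by $O(\gamma^{1/4})$ via a R\"ussmann-type transversality argument carried out in each of the four $\xi$-derivatives. The main obstacle I anticipate is precisely the control of the second Melnikov condition \eqref{Mel13}: the blocks $M_{hn}$ depend on $\xi$ through the whole history of homological equations, so their $C^{4}_W$-regularity and a non-degenerate dependence of the determinants on $\xi$ must be tracked uniformly in $\nu$, and it is exactly here that (A2*) plays its decisive role in ruling out cross-equation resonances that would otherwise be unavoidable for the coupled system.
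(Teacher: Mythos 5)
Your proposal is correct in outline and coincides with the approach the paper relies on: the paper in fact gives no proof of Theorem~\ref{KAM} at all, stating only that by (A2*) the coupling resonances disappear so that the KAM step is ``simpler than \cite{Geng21} and similar to \cite{Geng11}'' --- i.e.\ precisely the reversible Newton iteration with truncated homological equations inverted via \eqref{Mel1}--\eqref{Mel13}, propagation of reversibility, momentum conservation and the T\"oplitz--Lipschitz property, and a R\"ussmann-type measure estimate that you describe. There is no substantive divergence to report, though your sketch (like the paper itself) delegates all quantitative iteration and convergence details to the cited references.
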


Due to the (A2*) condition, the resonance caused by the coupling of equations disappears.
Thus the proof of the KAM step is   simpler than \cite{Geng21} and similar to \cite{Geng11}.  So we only  verify our equation satisfying (A1)-(A6).

\section[]{Application to the Coupled NLS}
\hspace{1.5em}
Let tangential sites $\mathcal{I}=\{i^{(1)}, i^{(2)},\cdots,i^{(b)}\}\subset \mathbb{Z}^2$.
Under periodic boundary conditions, we denote   the
eigenvalues of  $\Delta$ by $\lambda_n,\,n\in \mathbb{Z}^2$
$$\omega_{hi^{(a)}}=\lambda_{i^{(a)}}=|i^{(a)}|^2,\,\,1\leq a\leq b,\ \Omega_{hn}=\lambda_{n}=|n|^2 ,\,\,n\in \mathbb{Z}^2_1,$$
and the corresponding eigenfunctions $\phi_n(x)=(2\pi)^{-1}\me^{\mi \langle n,x\rangle}$

Scaling $u_h\rightarrow \varepsilon^\frac{1}{2}u_h$. Let $u_h(t,x)=\sum\limits_{n\in \mathbb{Z}^2}q_{hn}(t)\phi_n(x), h=1,\cdots,d$
then we obtain the equivalent lattice reversible equations respect to $S(q,\bar{q} )=(\bar{q},q),$
 \begin{equation}\label{CNLS2}
                       \begin{cases}
                        \dot{q}_{hn}=\mi\lambda_n q_{hn}+\varepsilon Q^{(q_{hn})}(q,\bar{q}),\quad                                 \\
                        \dot{\bar{q}}_{hn}=-\mi\lambda_n \bar{q}_{hn}+\varepsilon Q^{(\bar{q}_{hn})}(q,\bar{q}),                     \,n\in \mathbb{Z}^2,\, h=1,\cdots,d.
                       \end{cases}
\end{equation}
which corresponding an $S$-reversible vector field
\begin{equation}\label{crrev}
  \tilde{X}=\sum_{h=1}^d\sum_{n\in \mathbb{Z}^2} (\mi\lambda_{n}q_{hn}\partial_{q_{hn}}-\mi\lambda_n\bar q_{hn}\partial_{\bar q_{hn}} +\varepsilon Q^{(q_{hn})}\partial_{q_{hn}}+\varepsilon Q^{(\bar{q}_{hn})}\partial_{\bar q_{hn}}).
\end{equation}
For $G_h=|u_h|^4+O(|u|^6),\, h=1,\cdots,d$, we have
\begin{equation}\label{gn1}
Q^{(q_{hn})}=\sum\limits_{i,j,m\in \mathbb{Z}^2} Q^{(q_{hn})}_{ijm}q_{hi}q_{hj}\bar{q}_{hm}+\sum_{|\alpha+\beta|\geq5} Q^{(q_{hn})}_{\alpha \beta  }q^\alpha \bar{q}^\beta ,Q^{(\bar{q}_{hn})}=\overline{Q^{(q_{hn})}}
\end{equation}
with
\begin{equation}\label{gn3}
  \begin{split}
 Q^{(q_{hn})}_{ijm}=&\mi \int_{\mathbb{T}^2}\phi_i\phi_j\bar{\phi}_m\bar{\phi}_n dx\\
                  =&
                       \begin{cases}
                        \frac{\mi}{(2\pi)^{2}} ,\quad                                      & i+j-m-n=0,\\
                        0                         \quad                   &  i+j-m-n\neq 0,
                       \end{cases}
 \\
 Q^{(q_{hn})}_{\alpha\beta}
                  =& 0, \quad \sum_{h=1}^d \sum_{m\in \mathbb{Z}^2}(\alpha_{hm}-\beta_{hm})m-n\neq0
\end{split}
\end{equation}

By direct computation, one can verify that
the perturbations $Q^{(q)}
$ satisfies the following regularity property.
\begin{lemma}\label{reg}
For any fixed $\rho > 0$, $Q^{(q)}$   is real analytic as a map in a neighborhood
of the origin with
$$\|Q^{(q)}\|_{\rho}\leq c \|q\|^3_{\rho}.$$
\end{lemma}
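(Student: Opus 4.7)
The plan is to exploit the submultiplicativity of the weight $\me^{|n|\rho}$, combined with the momentum conservation encoded in \eqref{gn3}. Since $|n|\leq |i_1|+\cdots+|i_k|$ whenever $n=i_1+\cdots+i_k$, the triangle inequality gives $\me^{|n|\rho}\leq \prod_\ell \me^{|i_\ell|\rho}$. Hence any monomial $q^\alpha \bar{q}^\beta$ whose coefficient is nonzero only on a momentum-conservation locus can be bounded, after inserting the weight $\me^{|n|\rho}$ on the outside, by the product of weighted absolute values of the factors. This is the only analytic input needed; everything else is bookkeeping.

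First I would treat the cubic piece $C_{hn}:=\sum_{ijm}Q^{(q_{hn})}_{ijm}q_{hi}q_{hj}\bar{q}_{hm}$. Using that $Q^{(q_{hn})}_{ijm}=\frac{\mi}{(2\pi)^2}$ precisely when $i+j-m-n=0$ and vanishes otherwise, I would compute
\begin{equation*}
\sum_{n\in\mathbb{Z}^2_1}\me^{|n|\rho}|C_{hn}| \;\leq\; \frac{1}{(2\pi)^2}\sum_{i+j=m+n}\me^{(|i|+|j|+|m|)\rho}|q_{hi}||q_{hj}||q_{hm}| \;=\; \frac{1}{(2\pi)^2}\|q_h\|_\rho^3,
\end{equation*}
which already has the desired cubic form. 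Note that this piece is purely intra-species (depends only on $q_h,\bar q_h$), because $|u_h|^2u_h$ in \eqref{CNLS1} does not couple different components.

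Second, for the remainder $R_{hn}:=\sum_{|\alpha+\beta|\geq5}Q^{(q_{hn})}_{\alpha\beta}q^\alpha\bar{q}^\beta$ I would use that $G_h$ is real analytic near the origin in $\mathbb{C}^d$, so the Taylor coefficients of $\partial_{\bar{u}_h}G_h$ decay at least geometrically, say as $M\rho_0^{-k}$ in total degree $k$. The orthogonality of the exponentials $\phi_n$ on $\mathbb{T}^2$ forces the momentum conservation $\sum_{h,m}(\alpha_{hm}-\beta_{hm})m=n$ recorded in \eqref{gn3}, so the weight-submultiplicativity argument applies verbatim to each homogeneous piece. Summing over $k\geq 5$ yields
\begin{equation*}
\sum_{n\in\mathbb{Z}^2_1}\me^{|n|\rho}|R_{hn}|\;\leq\; c_1\|q\|_\rho^5\sum_{k\geq 0}(c_2\|q\|_\rho)^k,
\end{equation*}
which converges on a small ball $\{\|q\|_\rho<\rho_1\}$ and is there majorized by $c_3\|q\|_\rho^5\leq c\|q\|_\rho^3$.

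Combining the two bounds gives $\|Q^{(q)}\|_\rho\leq c\|q\|_\rho^3$ on a neighborhood of the origin. Real analyticity of $Q^{(q)}$ as a map $\ell^\rho_1\to\ell^\rho_1$ then follows because each truncation of the Taylor-Fourier expansion is a polynomial (hence holomorphic) in finitely many $q_{hi},\bar q_{hi}$, and the above estimate shows that the tails converge uniformly on compact subsets of the small ball. I do not expect any substantive obstacle: the argument is the standard weighted-$\ell^1$ convolution/Banach-algebra estimate, and the only point that deserves care is ensuring that the Taylor constants inherited from $G_h$ remain uniform in the momentum variables, which is automatic from the analyticity radius of $G_h$ on $\mathbb{C}^d$.
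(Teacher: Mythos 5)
Your argument is correct and is exactly the ``direct computation'' the paper alludes to without writing out: the paper gives no proof of Lemma \ref{reg}, and the intended one (as in the analogous regularity lemma of \cite{Geng11}) is precisely your weighted-$\ell^1$ Banach-algebra estimate, using the momentum constraint $i+j-m-n=0$ from \eqref{gn3} together with $\me^{|n|\rho}\leq\me^{(|i|+|j|+|m|)\rho}$ to factor the sum into $\|q_h\|_\rho^3$, and Cauchy estimates on the Taylor coefficients of $G_h$ for the degree $\geq 5$ tail. No gaps.
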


Let $P^0=\sum\limits_{\varrho=\pm}(Q^{(q^\varrho)}\frac{\partial}{\partial q^\varrho}),$ then we have
the following lemma.
\begin{lemma}\label{A56}
\emph{(1)} $[P^0, \mathbb{M}^{(l)}_0]=0,\,l=1,2,$
where
 \begin{equation}\label{M0}
  \mathbb{M}^{(l)}_0= \sum_{h=1}^d\sum_{\varrho=\pm} \sum_{n\in \mathbb{Z}^2}\varrho \mi (n)_lq^{\varrho}_{hn}\frac{\partial}{\partial q^{\varrho}_{hn}}
\end{equation}

\emph{(2)} $P^0$ satisfies T\"{o}plitz-Lipschitz property.
\end{lemma}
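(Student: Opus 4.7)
The plan is to handle items (1) and (2) separately, using the selection rules recorded in \eqref{gn3} as the single common input.

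For item (1), I would write out the Lie bracket $[P^0, \mathbb{M}^{(l)}_0]$ in coordinates. Since $\mathbb{M}^{(l)}_0$ is diagonal in the $(q,\bar q)$ basis, commuting it with a monomial vector field $q^\alpha \bar q^\beta \partial_{q^\varrho_{hn}}$ produces the scalar factor $\bigl(\sum_{h',m}(\alpha_{h'm}-\beta_{h'm})(m)_l-\varrho(n)_l\bigr)$ times the same monomial. By \eqref{gn3}, the coefficient $Q^{(q_{hn})}_{ijm}$ is nonzero only when $i+j-m-n=0$, and more generally $Q^{(q_{hn})}_{\alpha\beta}$ vanishes unless $\sum_{h',m}(\alpha_{h'm}-\beta_{h'm})m=n$. (Note that only the $h'=h$ block contributes for the cubic terms, and the higher-order indices distributed across other equations are forced to balance by the same $S^1 \times S^1$ translation symmetry inherited from $G_h$.) Hence the scalar factor vanishes on every surviving monomial, so $[P^0,\mathbb{M}^{(l)}_0]=0$ for $l=1,2$.

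For item (2), the Töplitz-Lipschitz property at this stage reduces to the $(q,\bar q)$-block estimates, since $P^0$ has no $\theta,I$-components yet (so \eqref{tl0} and \eqref{tl3} are automatic). The key observation is that the cubic part of $Q^{(q_{h n'})}$ is a convolution: shifting $n'\mapsto n+tc$ and differentiating with respect to $q^{\pm\sigma}_{h'(m\pm tc)}$, momentum conservation $i+j-k=n+tc$ forces $h'=h$ and one of $\{i,j,k\}$ to absorb the entire $tc$-shift, leaving a residual linear relation on the free indices that is independent of $t$. Therefore, for $|t|$ large enough (so that the shifted indices remain in $\mathbb{Z}^2_1$ and away from the finite set $\mathcal{I}$), the derivative is literally constant in $t$; the limit in \eqref{tl1}, \eqref{tl2} exists trivially and the differences in \eqref{tl4}, \eqref{tl5} vanish, which is stronger than the required $O(1/|t|)$ bound. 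The exponential weight $\me^{-|n\mp m|\rho}$ is furnished by Lemma \ref{reg} applied to the truncated tail: the analyticity of $Q^{(q)}$ in $\ell^\rho$ gives exponential decay of Taylor coefficients in the shifted Fourier index distance $|n\mp m|$.

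The main obstacle is the higher-order tail $\sum_{|\alpha+\beta|\geq 5}Q^{(q_{hn})}_{\alpha\beta}q^\alpha\bar q^\beta$, where one must show that after one differentiation in a shifted mode the resulting $(q,\bar q)$-dependent expression still enjoys the Töplitz rigidity and that the weighted sums converge with the claimed norm bound. The momentum-conservation constraint $\sum_{h',m}(\alpha_{h'm}-\beta_{h'm})m=n$ again produces a $t$-independent linear relation among the unsaturated indices, so the \emph{form} is Töplitz; the quantitative weighted estimate then follows by combining Lemma \ref{reg} with the exponential decay of the $\ell^\rho$-norm. Everything beyond this is routine bookkeeping of indices, already parallel to the Hamiltonian version in \cite{Geng11}, with the reversible structure playing no role in the verification of (A5) and (A6).
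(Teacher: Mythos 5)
Your argument is correct and supplies exactly the direct computation that the paper leaves implicit: Lemma \ref{A56} is stated without proof, and the intended verification is the one you give --- the commutator identity follows from the selection rule \eqref{gn3} applied monomial by monomial, and the T\"oplitz--Lipschitz bounds follow from the convolution structure of the nonlinearity together with the $\ell^\rho$ weights, as in the analogous Hamiltonian verification in \cite{Geng11}. The only cosmetic imprecision is that for the cubic part the restriction $h'=h$ comes from the uncoupled form $|u_h|^2u_h+\partial_{\bar u_h}G_h$ with $G_h=|u_h|^4+O(|u|^6)$ rather than from momentum conservation, but this does not affect the conclusion.
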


\begin{proposition}
   Let $\mathcal{I}$ be admissible. For vector field $\tilde{X}$ in \eqref{crrev}, there is a transformation $\Psi$ such that
\begin{equation}\label{Psi}
X=\Psi^* \tilde{X}:=(D\Psi)^{-1}\tilde{X}\circ \Psi=N+\mathcal{A}+P=\sum_{h=1}^d (N_h+\mathcal{A}_h)+P
\end{equation}
is a reversible system respect to
$S(\theta,I, z,\bar{z})=(-\theta, I, \bar{z}, z)$, where
\begin{equation*}
\begin{split}
  N_h
  =&\sum_{i\in \mathcal{I}}\omega_{hi}\frac{\partial}{\partial \theta_{hi}}+\sum_{\varrho=\pm1}\sum_{n\in \mathbb{Z}^2_1} \mi \varrho \Omega_{hn} z_{hn}^\varrho\frac{\partial}{\partial z_{hn}^\varrho}
\\
   +&\sum_{\varrho=\pm1}\varrho(\sum_{n\in \mathcal{L}_1}(\mi\omega_{hi} z_{hn}^\varrho\frac{\partial}{\partial z_{hn}^\varrho}
  ) - \sum_{n\in \mathcal{L}_2}(\mi\omega_{hi} z_{hn}^\varrho\frac{\partial}{\partial z_{hn}^\varrho}
  ))
 \end{split}
\end{equation*}
\begin{equation*}
\left\{\begin{aligned} \omega_{hi} &=\varepsilon^{-4} |i|^2  -\frac{1}{4\pi^2}\xi_{hi}+\sum_{j\in \mathcal{I}} \frac{1}{2\pi^2} \xi_{hj} \\ \Omega_{hn}&=\varepsilon^{-4} |n|^2   +\sum _{i\in \mathcal{I}} \frac{1}{2\pi^2} \xi_{hi} ,\quad \xi_h\in [h-\frac{1}{2},h]^b
  \end{aligned} \right.
\end{equation*}
\begin{equation*}
  \mathcal{A}_h=\sum_{\varrho=\pm1}\sum_{n\in \mathcal{L}_1}\mi \varrho\sqrt{\xi_{hi}\xi_{hj}} z_{hn}^\varrho \frac{\partial}{\partial z_{hm}^\varrho} +\sum_{\varrho=\pm1} \sum_{n\in \mathcal{L}_2} \mi\varrho \sqrt{\xi_{hi}\xi_{hj}}  \bar{z}_{hn}^\varrho\frac{\partial}{\partial z_{hm}^\varrho}
\end{equation*}
\begin{equation}\label{normalform}
 \| P\|=O(\varepsilon \|z\|)
\end{equation}
satisfies (A1)-(A6).
\end{proposition}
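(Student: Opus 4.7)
The plan is to follow the standard partial Birkhoff normal form / action-angle strategy, adapted to the reversible setting and to $d$ components. First I would rescale time by $t\mapsto \varepsilon^{-4}t$ (which produces the $\varepsilon^{-4}|n|^2$ in \eqref{AsyNF}) and split the cubic part of $Q^{(q_{hn})}$ using the selection rule $i+j-m-n=0$ from \eqref{gn3} into (i) resonant monomials, where $|i|^2+|j|^2=|m|^2+|n|^2$ and the multi-index involves at most the tangential sites in $\mathcal{I}$ and at most two normal modes, and (ii) the remaining non-resonant monomials. Since the frequencies $\lambda_n=|n|^2$ are integers, Diophantine-type lower bounds on $\langle k,\lambda\rangle$ are automatic on non-resonant indices, and one can construct a near-identity transformation $\Psi$ whose generator solves the usual cohomological equation and is reversible with respect to $S(q,\bar q)=(\bar q,q)$. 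This removes the non-resonant cubic part and leaves only the resonant monomials; the quintic and higher terms are pushed into the remainder $P$ of order $O(\varepsilon\|z\|)$ as in \eqref{normalform}.

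Next I would introduce action-angle variables on the tangential sites, $q_{hi^{(a)}}=\sqrt{I_{ha}+\xi_{ha}}\,e^{\mi\theta_{ha}}$ (with $\xi_{ha}\in[h-\tfrac12,h]^b$), and keep $z_{hn}=q_{hn}$ for $n\in\mathbb Z^2_1$. Expanding the resonant cubic part in these variables and evaluating the $I_{ha}=0$ contribution produces the integrable normal form: the constant-in-$\theta$ monomials $|q_{hi}|^2|q_{hj}|^2$ yield the tangential and normal frequencies $\omega_{hi}$ and $\Omega_{hn}$ shown in the proposition (the explicit $\tfrac{1}{2\pi^2}$ and $\tfrac{1}{4\pi^2}$ coefficients come directly from $Q^{(q_{hn})}_{ijm}=\tfrac{\mi}{(2\pi)^2}$ in \eqref{gn3}), and the resonant couplings associated with the pairs classified by $\mathcal{L}_1$ and $\mathcal{L}_2$ from Section~3 give the off-diagonal block $\mathcal{A}_h$. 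Terms depending on $I_{ha}$ beyond this level or involving higher powers of $z$ are absorbed into $P$.

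Once $N$, $\mathcal A$, $P$ are identified, verification of (A1)-(A6) is mostly bookkeeping. For (A1), the Jacobian $\partial\omega/\partial\xi$ is a block-diagonal $db\times db$ matrix whose $h$-th block is $-\tfrac{1}{4\pi^2}\mathrm{Id}_b+\tfrac{1}{2\pi^2}\mathbf{1}\mathbf{1}^T$, which has nonzero determinant, so $\omega$ is a $C^4_W$-diffeomorphism onto its image. For (A2), the splitting of $\Omega_{hn}$ into $\varepsilon^{-4}|n|^2$ plus a linear function of $\xi$ is visible by inspection, and the $C^4_W$-norm of $\Omega^0_{hn}$ is bounded uniformly in $n$. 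The new assumption (A2*) is the crux: the perturbative part of the normal frequency is $\Omega^0_{hn}=\tfrac{1}{2\pi^2}\sum_{i\in\mathcal I}\xi_{hi}$, independent of $n$, so the hypothesis $|\xi_h-\xi_{h'}|_1>C$ gives the gap $|\Omega^0_{hn}-\Omega^0_{h'n}|>\gamma$ uniformly in $n$ for all $h\neq h'$, as required in \eqref{Mel14}.

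The Melnikov conditions \eqref{Mel1}-\eqref{Mel13} in (A3) are standard measure-theoretic lemmas; the key step is to excise, for each $k,n,n'$, a set of $\xi$ of measure $\lesssim\gamma/|k|^\tau$, and a Fubini/transversality argument in the direction of $\omega$ (using (A1)) together with (A2*) to handle the coupling cases $h\neq h'$ shows the total excised set has measure $O(\gamma^{1/4})$; this is exactly where (A2*) replaces the more delicate resonance analysis of \cite{Geng21}. Assumptions (A4)-(A6) follow from the analytic regularity lemma \ref{reg} applied to $P$, and from Lemma \ref{A56}: momentum conservation and Töplitz-Lipschitz property are preserved under the partial Birkhoff normal form transformation because the generator is constructed from momentum-conserving, Töplitz-Lipschitz building blocks and these classes are closed under Poisson/Lie bracket (more precisely under the reversible analogue of the bracket used here). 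The main obstacle I expect is carrying the Töplitz-Lipschitz estimates through $\Psi$ with the stated exponential weights $e^{-|n\mp m|\rho}$: one must track how the generator's Töplitz limits interact with the pairing $(n,m)\to(n+tc,m\pm tc)$ in (A6), using that only finitely many resonant couplings survive in $\mathcal{L}_1\cup\mathcal{L}_2$ and that non-resonant cubic divisors are bounded below by fixed integers, so that the generator and hence the transformed vector field inherit \eqref{tl0}-\eqref{tl5} with constants depending only on $K,\rho,L$.
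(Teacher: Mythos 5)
Your proposal follows essentially the same route as the paper: a partial Birkhoff normal form step removing the non-resonant cubic monomials (whose divisors $\lambda_i-\lambda_j+\lambda_n-\lambda_m$ are nonzero integers, hence uniformly bounded below), action--angle variables on the tangential sites with $\xi_h\in[h-\tfrac12,h]^b$ and a scaling, followed by a direct verification of (A1)--(A6) in which (A3) is handled by the standard excision/measure argument (the paper cites Section 3.2 of \cite{Geng11}) and (A5)--(A6) by transporting the commutation relation and the T\"oplitz--Lipschitz structure through $\Psi$ (the paper cites Lemma 5.4 of \cite{Geng21}). One small correction: the gap (A2*) does not follow from $|\xi_h-\xi_{h'}|_1>C$ alone, since $\Omega^0_{hn}-\Omega^0_{h'n}=\tfrac{1}{2\pi^2}\sum_{i\in\mathcal I}(\xi_{hi}-\xi_{h'i})$ could vanish by cancellation even when the $\ell^1$-distance is large; it is the disjoint boxes $\xi_h\in[h-\tfrac12,h]^b$ that force all the componentwise differences to have the same sign and yield $|\Omega^0_{hn}-\Omega^0_{h'n}|\geq\tfrac{b}{4\pi^2}$ as in the paper.
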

\begin{proof}
	
 The proof consists of several transformation. Firstly let
\begin{equation*}
	\begin{split}
		F=F^{(q)}\frac{\partial}{\partial q}+F^{(\bar{q})}\frac{\partial}{\partial p} {=\sum\limits_{{i-j+n-m=0,\atop |i|^2-|j|^2+|n|^2-|m|^2\neq0,}\atop \mathcal{I} \bigcap \{i,j,n,m\}\geq2,h=1,\cdots ,d} \frac{\mi \varepsilon}{4\pi^2(\lambda_i-\lambda_j+\lambda_n-\lambda_m)} q_{hi}\bar{q}_{hj} q_{hn}\frac{\partial}{\partial q_{hm}}} \\
		{+\sum\limits_{{i-j+n-m=0,\atop |i|^2-|j|^2+|n|^2-|m|^2\neq0,}\atop \mathcal{I} \bigcap \{i,j,n,m\}\geq2,h=1,\cdots, d} \frac{\mi \varepsilon}{4\pi^2(\lambda_i-\lambda_j+\lambda_n-\lambda_m)} q_{hi}\bar{q}_{hj}
			\bar{q}_{hm}\frac{\partial}{\partial \bar{q}_{hn}} }.
	\end{split}
\end{equation*}
\begin{equation*}
	\begin{split}
		(\phi^1_F)^* \tilde{X}&=  \sum_{\varrho=\pm 1,\,h=1,\cdots ,d}\varrho (\mi \sum_{i\in \mathcal{I}} \lambda_i q_{hi}^\varrho \frac{\partial}{\partial q_{hi}^\varrho }+\mi \sum_{i\in \mathbb{Z}^2_1} \lambda_i z_{hi}^\varrho  \frac{\partial}{\partial z_{hi}^\varrho }
		+  \frac{\mi \varepsilon}{4\pi^2} \sum_{i\in \mathcal{I}}  |q_{hi}|^2q_{hi}^\varrho  \frac{\partial}{\partial q_{hi}^\varrho } \\
	 &	+\frac{\mi \varepsilon}{2\pi^2}\sum_{i,j\in \mathcal{I},i\neq j}  |q_{hj}|^2q_{hi}^\varrho  \frac{\partial}{\partial q_{hi}^\varrho }
		+  \frac{\mi \varepsilon}{2\pi^2} \sum _{i\in \mathbb{Z}^2_1,j\in\mathcal{I}} |q_{hj}|^2z_{hi}^\varrho  \frac{\partial}{\partial z_{hi}^\varrho } \\
		&	+\frac{\mi \varepsilon}{2\pi^2}\sum_{n\in \mathcal{L}_1} \bar{q}_{hi}^\varrho q_{hj}^\varrho z_{hm}^\varrho \frac{\partial}{\partial z_{hn}^\varrho }
 	+\frac{\mi \varepsilon}{2\pi^2}\sum_{n\in \mathcal{L}_2}  q_{hi}^\varrho q_{hj}^\varrho \bar{z}_{hm}^\varrho  \frac{\partial}{\partial z_{hn}^\varrho })
		+O(\sum_{\varrho=\pm 1} (  \varepsilon\|z\|^3\frac{\partial}{\partial z^\varrho}  )).
	\end{split}
\end{equation*}
We remind that $(n,m)$ are resonant pairs,and$(i,j) $ is uniquely determined by  $(n,m)$.
We introduce  action-angle variables $(\theta, I)\in \mathbb{T}^{db}\times \mathbb{R}^{db}$
by
$$q_{hj}=\sqrt{I_{hj}+\xi_{hj}}\me^{\mi \theta_{hj}},\,\,\bar{q}_{hj}=\sqrt{I_{hj}+\xi_{hj}}\me^{-\mi \theta_{hj}},\,\,j\in \mathcal{I},$$
and
 scale in time
$  \xi\rightarrow \varepsilon^3 \xi ,I\rightarrow \varepsilon ^5 I, (z,\bar{z})\rightarrow \varepsilon ^\frac{5}{2}(z,\bar{z}). $
Choosing $\xi$ satisfying
$$
    \xi_h\in [h-\frac{1}{2},h]^b,\  h=1,\cdots, d,
$$
we finally get
\begin{equation}
	X=\varepsilon^{-9}X(\varepsilon^3\xi,\varepsilon^5I,\theta,\varepsilon ^\frac{5}{2}z,\varepsilon ^\frac{5}{2} \bar{z})=N+ \mathcal{A}+P=\sum_{h=1}^d (N_h+\mathcal{A}_h)+P
\end{equation}
where
\begin{equation*}
	\begin{split}
		N_h
		=&\sum_{i\in \mathcal{I}}\omega_{hi}\frac{\partial}{\partial \theta_{hi}}+\sum_{\varrho=\pm1}\sum_{n\in \mathbb{Z}^2_1} \mi \varrho \Omega_{hn} z_{hn}^\varrho\frac{\partial}{\partial z_{hn}^\varrho}
		\\
		+&\sum_{\varrho=\pm1}\varrho(\sum_{n\in \mathcal{L}_1}(\mi\omega_{hi} z_{hn}^\varrho\frac{\partial}{\partial z_{hn}^\varrho}
		) - \sum_{n\in \mathcal{L}_2}(\mi\omega_{hi} z_{hn}^\varrho\frac{\partial}{\partial z_{hn}^\varrho}
		))
	\end{split}
\end{equation*}
\begin{equation*}
	\left\{\begin{aligned} \omega_{hi} &=\varepsilon^{-4}|i|^2-\frac{1}{4\pi^2}\xi_{hi}+\sum_{j\in\mathcal{I}} \frac{1}{2\pi^2} \xi_{hj} \\ \Omega_{hn}&=\varepsilon^{-4}|n|^2 +\sum _{i\in \mathcal{I}} \frac{1}{2\pi^2} \xi_{hi} ,\quad \xi_h\in [h-\frac{1}{2},h]^b
	\end{aligned} \right.
\end{equation*}
\begin{equation*}
	\mathcal{A}_h=\sum_{\varrho=\pm1}\sum_{n\in \mathcal{L}_1}\mi \varrho\sqrt{\xi_{hi}\xi_{hj}} z_{hn}^\varrho \frac{\partial}{\partial z_{hm}^\varrho} +\sum_{\varrho=\pm1} \sum_{n\in \mathcal{L}_2} \mi\varrho \sqrt{\xi_{hi}\xi_{hj}}  \bar{z}_{hn}^\varrho\frac{\partial}{\partial z_{hm}^\varrho}
\end{equation*}
\begin{equation*}
 \| P\|=O(\varepsilon \|z\|).
\end{equation*}
Then we give the verification of  assumptions (A1)-(A6) for \eqref{normalform}.

\emph{Verifying }(A1): It is obvious as the Jacobian matrix
$$\frac{\partial \omega_h}
 {\partial\xi_h}=\frac{1}{4\pi^2}\left( \begin{matrix} 1 &2 &...&2 \\2&1&...&2\\...&...&...&...\\2&2&...&1        \end{matrix} \right).$$

\emph{Verifying }(A2) and (A2*): It is obvious  $|\Omega_{hn}-\Omega_{h'n}|>\frac{b}{4\pi^2}.$

\emph{Verifying }(A3): For \eqref{normalform},$M_n$ read as follows:
$ M_{hn}=\Omega_{hn},\, n\in \mathbb{Z}^2_2, $
\begin{equation*}
\begin{split}
&M_{hn}=\left(\begin{matrix} \Omega_n+\omega_i&\frac{1}{2\pi^2} \sqrt{\xi_i\xi_j} \\
                     \frac{1}{2\pi^2} \sqrt{\xi_i\xi_j}&\Omega_m+\omega_j
                           \end{matrix}\right),\,n \in\mathcal{L}_1, \\
&M_{hn}=\left(\begin{matrix} \Omega_{n} -\omega_i&\frac{1}{2\pi^2} \sqrt{\xi_i\xi_j} \\
                     -\frac{1}{2\pi^2} \sqrt{\xi_i\xi_j}&-\Omega_{m} +\omega_j
                            \end{matrix}\right) ,\,n \in\mathcal{L}_2 ,\quad h=1\cdots d. \end{split}
\end{equation*}
We can  refer to  Section 3.2 in \cite{Geng11} since the proof is similar.

\emph{Verifying }(A4): Suppose   vector field \eqref{normalform} is defined
on the domain $D(r,s)$ with  $0<r<1, s=\varepsilon^2.$ It follows from \eqref{normalform} that
$ \|P\|_{s;D_\rho(r,s)\times\mathcal{O}}\leq c\varepsilon s\leq c\varepsilon.$

\emph{Verifying }(A5) and (A6): Through the transformation $\Psi$ in \eqref{Psi}, the vector fields  $\mathbb{M}^{(1)}_0$ and $\mathbb{M}^{(2)}_0$
in \eqref{M0} are transformed into $\mathbb{M}^{(l)}=\Psi^*\mathbb{M}^{(l)}_0$, then for $\, l\in\{1,2\}$
$$[P, \mathbb{M}^{(l)}]=[\Psi^*P^0, \Psi^*\mathbb{M}^{(l)}_0]=\Psi^*[P^0, \mathbb{M}^{(l)}_0]=0.$$

Lemma 5.4  in  \cite{Geng21} shows that the transformation $\Psi$  preserves the    T\"{o}plitz-Lipschitz property.
Thus $N+\mathcal{A}+P $ satisfies (A6).

\end{proof}

Using this proposition and Theorem 3.1, we can get Theorem 1.1.

\bibliographystyle{plain}

\end{document}